\theoremstyle{definition}
\newtheorem{theorem}{Theorem}[section]
\newtheorem{corollary}[theorem]{Corollary}
\newtheorem{lemma}[theorem]{Lemma}
\newtheorem{proposition}[theorem]{Proposition}
\theoremstyle{definition}
\newtheorem{definition}[theorem]{Definition}
\newtheorem{example}[theorem]{Example}
\newtheorem{remark}[theorem]{Remark}
\numberwithin{equation}{subsection}
\newcommand{\m}{\mathfrak{m}}
\newcommand{\NN}{\mathbb{N}}
\newcommand{\Spec}{\operatorname{Spec}}
\newcommand{\Depth}{\operatorname{depth}}
\newcommand{\Hom}{\operatorname{Hom}}
\newcommand{\Ext}{\operatorname{Ext}}
\newcommand{\Ker}{\operatorname{Ker}}
\newcommand{\Ass}{\operatorname{Ass}}
\newcommand{\Ht}{\operatorname{ht}}
\newcommand{\gs}{\geqslant}
\newcommand{\ds}{\displaystyle}
\newcommand{\p}{\mathfrak{p}}
\newcommand{\MIN}{\operatorname{Min}}
\newcommand{\ann}{\operatorname{ann}}
\newcommand{\ck}[1]{{#1}^{\vee}}
\newcommand{\n}{\mathfrak{n}}
\newcommand{\ps}[1]{\llbracket {#1} \rrbracket}
\newcommand{\GG}[1]{{#1}^{\Gamma}}
\begin{document}
\newcommand{\tens}{\otimes}
\newcommand{\hhtest}[1]{\tau ( #1 )}
\renewcommand{\hom}[3]{\operatorname{Hom}_{#1} ( #2, #3 )}

\title{A sufficient condition for strong $F$-regularity}
\author{Alessandro De Stefani}
\author{Luis N\'u\~nez-Betancourt}

\maketitle

\begin{abstract} Let $(R,\m,K)$ be an $F$-finite Noetherian local ring which has a canonical ideal $I \subsetneq R$. We prove that if $R$ is $S_2$ and $H^{d-1}_\m(R/I)$ is a simple $R\{F\}$-module, then $R$ is a strongly $F$-regular ring. In particular, under these assumptions, $R$ is a Cohen-Macaulay normal domain.
\end{abstract}
\section{Introduction}
Let $(R,\m,K)$ be a Noetherian local ring of positive characteristic and let $x\in\m$ be a nonzero divisor in $R$. A central question in the study of singularities is whether good properties of the ring $R/xR$ imply good properties of $R$. This is related to whether a type of singularity deforms. 
It is known  that $F$-purity and strong $F$-regularity, two important and well studied types of singularities in positive characteristic, do not deform. This was showed by an example of Fedder \cite{FedderFputityFsing} for $F$-purity, and by an example of Singh \cite{AnuragNotDeform}, for strong $F$-regularity. 
However, if $R$ is a Gorenstein ring, both $F$-purity and strong $F$-regularity do deform. 

Enescu  \cite{EnescuCovers} changed gears by looking at canonical ideals instead of ideals generated by nonzero divisors. Suppose that $R$ has a canonical ideal, i.e. an ideal $I$ such that $I\cong \omega_R$. Note that in a Gorenstein ring, an ideal is generated by a nonzero divisor if and only if it is a canonical ideal. Recently,
Ma \cite{MaRFmod} showed that, under mild assumptions on $R$, if $R/I$ is $F$-pure, then $R$ is also $F$-pure \cite[Theorem $3.4$]{MaFpurity}. Inspired by his result, we investigate if $R/I$ being strongly $F$-regular implies  that $R$ is strongly $F$-regular or, equivalently, if $R/I$ being F-rational implies that $R$ is strongly $F$-regular. Our main result is Theorem \ref{MainTech}, which is a more general version of the following.

\begin{theorem}\label{MainThm}
Let $(R,\m,K)$ be an excellent local ring of dimension $d$ and characteristic $p>0$. Suppose that $R$  is $S_2$ and it has a canonical ideal $I\cong \omega_R$ such that
$R/I$ is $F$-rational. Then $R$ is a strongly $F$-regular ring.
\end{theorem}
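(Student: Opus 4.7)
The plan is to deduce Theorem \ref{MainThm} as a corollary of Theorem \ref{MainTech}, which appears in the abstract. The sole task is to verify, under the hypotheses of Theorem \ref{MainThm}, that $H^{d-1}_\m(R/I)$ is a simple $R\{F\}$-module; once this is done, Theorem \ref{MainTech} yields strong $F$-regularity of $R$.

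First, I would pin down the dimension of $R/I$. The canonical module of a local ring that admits one has rank one, so an isomorphism $I \cong \omega_R$ forces $I$ to be a rank-one ideal. Combined with the $S_2$ assumption on $R$, this means $I$ is an unmixed height-one ideal of $R$, and hence $\dim(R/I) = d-1$.

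Second, excellence passes from $R$ to $R/I$, so $R/I$ is an excellent local ring of dimension $d-1$. The $F$-rationality hypothesis then gives that $R/I$ is Cohen-Macaulay of dimension $d-1$, and by K.~E.~Smith's characterization of $F$-rationality, the top local cohomology $H^{d-1}_\m(R/I)$ contains no proper nonzero Frobenius-stable $(R/I)$-submodule; that is, it is a simple $(R/I)\{F\}$-module. Since $I$ annihilates this module, any $R\{F\}$-submodule is automatically an $(R/I)\{F\}$-submodule, so $H^{d-1}_\m(R/I)$ is also simple as an $R\{F\}$-module.

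Finally, I would apply Theorem \ref{MainTech} to conclude strong $F$-regularity of $R$. The only subtlety in this reduction is that the abstract phrases Theorem \ref{MainTech} for $F$-finite local rings, whereas Theorem \ref{MainThm} only assumes excellence; the standard fix is a Hochster--Huneke $\Gamma$-construction, which faithfully flatly extends $R$ to an $F$-finite local ring $R^{\Gamma}$ while preserving the $S_2$ hypothesis, the canonical-ideal hypothesis, and the $F$-rationality of $R/I$, and along which strong $F$-regularity descends from $R^{\Gamma}$ back to $R$. The main obstacle throughout lies in Theorem \ref{MainTech} itself; the reduction sketched here is the routine direction.
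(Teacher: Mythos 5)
Your proposal follows the same route as the paper: verify that $F$-rationality of $R/I$ supplies the simplicity hypothesis of Theorem \ref{MainTech}, then dispose of the $F$-finiteness mismatch via the $\Gamma$-construction. The first part is fine: $R$ being $S_2$, excellent, and equipped with a canonical ideal forces $I$ to have height one and $R/I$ to be $(d-1)$-dimensional, unmixed, and equidimensional; then $F$-rationality of $R/I$ gives Cohen--Macaulayness and, by Smith's theorem, simplicity of $H^{d-1}_\m(R/I)$ as an $(R/I)\{F\}$-module, hence as an $R\{F\}$-module.

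The step you call ``routine,'' however, hides the two pieces of real content in the paper's argument. First, the $\Gamma$-construction is applied only after passing to the completion $\widehat{R}$; that preliminary reduction is needed both because the construction is set up for complete rings (one needs a Gorenstein presentation of $R$ to transport the canonical ideal to $\GG R$) and because the descent of strong $F$-regularity from $\widehat R$ to $R$ and from $\GG R$ to $\widehat R$ rests on the base-change identities $\tau(R)\widehat R = \tau(\widehat R)$ and $\tau(\widehat R)\GG R = \tau(\GG R)$, the latter via Aberbach--Enescu. Second, and more importantly, the assertion that the $\Gamma$-construction ``preserves the $F$-rationality of $R/I$'' is not automatic: the paper invokes \cite[Proposition 3.2]{Aberbach01}, whose hypotheses are verified precisely because $R/I$ is \emph{Gorenstein}. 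Establishing that $R/I$ is Gorenstein is Lemma \ref{dual} in the paper, which requires the short exact sequence of $R\{F\}$-modules from Remark \ref{Rem SES} together with Matlis duality, and is not a formal consequence of $R/I$ being $F$-rational. So while the overall architecture of your argument matches the paper's, you should not treat the preservation of $F$-rationality under the $\Gamma$-construction as free; it needs Lemma \ref{dual} (Gorensteinness of $R/I$) as input, and without that the cited flat-base-change result for $F$-rationality does not apply.
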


This theorem extends a result of Enescu \cite[Corollary 2.9]{EnescuCovers} by dropping the hypotheses of $R$ being a Cohen-Macaulay (normal) domain. We point out that these three conditions are implied by $R$ being strongly $F$-regular. 
In his work, Enescu uses properties of pseudo-canonical covers, while we focus on an interplay between Frobenius actions and $p^{-1}$-linear maps combined with structural properties of local cohomology.  As a consequence of Theorem \ref{MainThm}, we extend a result of Goto, Hayasaka, and Iai \cite[Corollary 2.4]{Goto} to rings which are not necessarily Cohen-Macaulay. Specifically, we show that, under the assumptions of Theorem \ref{MainThm},  if $R/I$  is regular, then $R$ is also regular (see Corollary \ref{ExtGoto}).

Throughout this article $(R,\m,K)$ will denote a Noetherian local ring of Krull dimension $d$ and  characteristic $p>0.$ $\ck{(-)}$  denotes the Matlis dual functor $\Hom_R(\ - \ ,E_R(K))$.
In addition, $\omega_R$ denotes a canonical module for $R$, which is a finitely generated $R$-module satisfying $\ck {\omega_R} \cong H^d_\m(R)$.
\section{Preliminaries}
\subsection{Canonical modules}
In this section we present several facts and properties regarding canonical modules over rings which are not necessarily Cohen-Macaulay. We refer to \cite{Aoyama,HoHuOmega,MaFpurity} for details.

We recall that not every ring has a canonical module; however, every complete ring has one. In fact, 
if $R$ is a homomorphic image of a Gorenstein local ring $(S,\n,L)$ of dimension $n$, then $\omega_R \cong \Ext^{n-d}_S(R,S)$.

\begin{proposition}[{\cite[Corollary $4.3$]{Aoyama}}]
Let $(R, \m,K)$ be
a local ring with canonical module $\omega_R.$ If $R$ is equidimensional, then for every prime ideal $P$,
$(\omega_R)_P$ is a canonical module for $R_P.$
\end{proposition}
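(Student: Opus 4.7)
The plan is to leverage the description of $\omega_R$ as an $\Ext$-module over a Gorenstein cover together with flat base change. First I would reduce to the case where $R$ is a homomorphic image $R = S/J$ of a Gorenstein local ring $(S,\n,L)$ of dimension $n$; this is the standard setup, achievable after completion via Cohen's structure theorem. In this setting
\[
\omega_R \cong \Ext^{n-d}_S(R, S),
\]
and being a quotient of a Gorenstein ring makes $R$ universally catenary.

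Next I would fix $P \in \Spec R$ and let $Q \subset S$ be its preimage, so $R_P = S_Q / J S_Q$. Since $\Ext$ commutes with flat base change,
\[
(\omega_R)_P \cong \Ext^{n-d}_{S_Q}(R_P, S_Q).
\]
The ring $S_Q$ is Gorenstein of dimension $n - \dim(S/Q)$, so the right-hand side coincides with $\omega_{R_P}$ precisely when $n - d = \dim S_Q - \dim R_P$, equivalently when the dimension formula
\[
\dim R = \dim R_P + \dim R/P
\]
holds at $P$.

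To verify the dimension formula, I would choose a minimal prime $\mathfrak{q} \subseteq P$. Equidimensionality of $R$ gives $\dim R/\mathfrak{q} = d$, and catenarity of the domain $R/\mathfrak{q}$ yields $\Ht(P/\mathfrak{q}) = d - \dim R/P$. Maximizing over such $\mathfrak{q}$ gives $\dim R_P = d - \dim R/P$, as needed. Combined with the $\Ext$ identification above, this proves $(\omega_R)_P \cong \omega_{R_P}$.

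The main obstacle is justifying the initial reduction to a Gorenstein-quotient presentation while keeping control of the dimension data at $P$: equidimensionality of $R$ is not automatically inherited by $\widehat{R}$, so one cannot naively pass to the completion. The cleanest resolution is either to invoke an intrinsic characterization of $\omega_R$ via Matlis duality $\ck{\omega_R} \cong H^d_\m(R)$ combined with the local duality theorem (translating the equidimensionality-plus-catenarity input into a statement about local cohomology of $R_P$), or to appeal to the more refined structural results on canonical modules tailored to rings that merely admit such a module, avoiding the need for an explicit Gorenstein cover.
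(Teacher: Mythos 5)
The paper does not prove this statement; it is cited verbatim from Aoyama's \emph{Some basic results on canonical modules} (Corollary~4.3), so there is no internal proof to compare against. Your proposal therefore stands or falls on its own, and it has a genuine gap that you yourself flag but do not close.

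The core issue is the very first reduction, to a presentation $R = S/J$ with $(S,\n,L)$ Gorenstein local. The mere existence of a canonical module $\omega_R$ (a finitely generated module with $\ck{\omega_R}\cong H^d_\m(R)$) does \emph{not} give such a presentation. Having a dualizing complex is equivalent to being a quotient of a Gorenstein local ring (Sharp's conjecture, proved by Kawasaki), but possessing a canonical module is a strictly weaker condition on the top local cohomology only. Your fallback of completing to use Cohen's structure theorem runs into exactly the problem you identify: equidimensionality need not pass from $R$ to $\widehat R$, and without equidimensionality of $\widehat R$ the localization statement for $\widehat{\omega_R}$ does not descend cleanly to $R_P$ (whose completion need not be a localization of $\widehat R$). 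The rest of your argument --- flat base change for $\Ext$ over $S_Q$, and deducing $\dim R = \dim R_P + \dim R/P$ from equidimensionality plus catenarity of $R/\mathfrak{q}$ --- is correct \emph{once} a Gorenstein cover is available, since quotients of Gorenstein rings are universally catenary. But that availability is precisely what is unproved.

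Aoyama's own route is intrinsic: he develops structural facts about $\omega_R$ directly (the annihilator $\ann_R(\omega_R)$ as the unmixed part of $(0)$, equidimensionality of $\widehat R/\ann_{\widehat R}(\widehat{\omega_R})$, the endomorphism ring $\Hom_R(\omega_R,\omega_R)$ as the $S_2$-ification, and altitude/dimension formulas for rings admitting a canonical module) and uses these to control the comparison between $R_P$, $\widehat{R_P}$, and localizations of $\widehat R$, without ever assuming $R$ is a Gorenstein quotient. Your suggested ``invoke local duality and the intrinsic characterization'' is pointing in roughly that direction, but carrying it out amounts to reproving the body of Aoyama's paper, so the proposal as written is incomplete rather than a correct alternative proof.
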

\begin{proposition}[{\cite[Proposition 2.4]{MaFpurity}}]
Let $(R,\m,K)$ be a local ring with canonical module $\omega_R$. 
If $R$ is equidimensional and unmixed, then the following conditions are equivalent:  
\begin{enumerate}[(1)]
\item $\omega_R$ is isomorphic to an ideal $I \subseteq R$.
\item $R$ is generically Gorenstein, i.e. if $R_\p$ is Gorenstein for all $\p \in \MIN(R) (=\Ass(R))$.
\item $\omega_R$ has rank $1$.
\end{enumerate}
Moreover, when any of these equivalent conditions hold, $I$ is a height one ideal containing a nonzero divisor of $R$, and $R/I$ is equidimensional and unmixed \cite[Proposition 2.6]{MaFpurity}. If, in addition, $R$ is Cohen-Macaulay, then $R/I$ is Gorenstein.
\end{proposition}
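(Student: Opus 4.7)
The plan is to establish the three equivalences using the preceding proposition of Aoyama, and then to address the moreover clause. The key preliminary observation is that for any $\p \in \MIN(R)$, the module $(\omega_R)_\p$ is a canonical module for the artinian local ring $R_\p$; in particular it has length $\ell(R_\p)$, and it is free of rank one over $R_\p$ if and only if $R_\p$ is Gorenstein.

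The equivalence (2)$\Leftrightarrow$(3) is then immediate: since $R$ is unmixed, $\MIN(R) = \Ass(R)$, so ``$\omega_R$ has rank one'' translates to $(\omega_R)_\p \cong R_\p$ for every $\p \in \MIN(R)$, which is exactly condition (2). For (1)$\Rightarrow$(3), given an embedding $\omega_R \cong I \subseteq R$, the inclusion $I_\p \subseteq R_\p$ at any $\p \in \MIN(R)$ has source of length $\ell(R_\p)$, forcing $I_\p = R_\p$; thus $I$ is contained in no associated prime, whence it contains a nonzero divisor and has rank one.

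For (3)$\Rightarrow$(1), I would work through the total ring of fractions $Q(R)$. Rank one gives an isomorphism $\omega_R \otimes_R Q(R) \cong Q(R)$, and the localization map $\omega_R \to \omega_R \otimes_R Q(R)$ is injective because $\Ass_R(\omega_R) = \MIN(R) = \Ass(R)$ (a standard property of canonical modules over equidimensional unmixed rings), so no nonzero divisor of $R$ annihilates a nonzero element of $\omega_R$. Picking a finite generating set of $\omega_R$ and clearing denominators by a common nonzero divisor $r \in R$ gives $r\omega_R \subseteq R$; since $r$ is a nonzero divisor, $\omega_R \cong r\omega_R = I \subseteq R$.

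For the moreover clause, the above argument already shows that $I$ contains a nonzero divisor with $I_\p = R_\p$ at every $\p \in \MIN(R)$, so $\Ht(I) \geq 1$; the sharper assertions $\Ht(I)=1$ and that $R/I$ is equidimensional and unmixed can be taken from \cite[Proposition 2.6]{MaFpurity}. If $R$ is in addition Cohen-Macaulay, then $R/I$ is Cohen-Macaulay of dimension $d-1$ with canonical module $\omega_{R/I} \cong \Ext^1_R(R/I, \omega_R)$; applying $\Hom_R(-, \omega_R)$ to $0 \to I \to R \to R/I \to 0$ and using $\End_R(\omega_R) \cong R$ identifies this $\Ext^1$ with $R/I$, so $R/I$ is Gorenstein. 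I expect the most delicate step to be the injectivity in (3)$\Rightarrow$(1), which relies crucially on the identification $\Ass_R(\omega_R) = \MIN(R)$; this is the point where both equidimensionality and unmixedness are essential, since otherwise $\omega_R$ could carry associated primes outside $\Ass(R)$ and fail to embed into $R$.
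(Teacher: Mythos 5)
The paper does not prove this statement; it cites it directly from Ma \cite[Proposition 2.4]{MaFpurity} (and \cite[Proposition 2.6]{MaFpurity} for part of the ``moreover'' clause), so there is no internal proof to compare against. Your argument is correct and is essentially the standard one. A couple of small points you gloss over but which are standard: in the Cohen-Macaulay case you assert $R/I$ is Cohen-Macaulay of dimension $d-1$; this follows from the depth lemma applied to $0 \to I \to R \to R/I \to 0$, using that $I \cong \omega_R$ is maximal Cohen-Macaulay. You also use $\Hom_R(I,\omega_R) \cong \End_R(\omega_R) \cong R$, which holds because a Cohen-Macaulay ring is $S_2$. And as you rightly flag, the identification $\Ass_R(\omega_R) = \MIN(R) = \Ass(R)$ (Aoyama) is the crucial ingredient making the clearing-of-denominators step in (3)$\Rightarrow$(1) legitimate, and it is exactly where equidimensionality and unmixedness enter.
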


\begin{definition}
Let $k$ be a positive integer. Recall that a finitely generated $R$-module $M$ is said to satisfy {\it Serre's condition $S_k$} (or simply $M$ is $S_k$) if
\[
\Depth(M_\p) \gs \min\{k,\Ht(\p)\}
\]
for all $\p \in \Spec(R).$
\end{definition}
$R$ is $S_k$ if it satisfies Serre's condition $S_k$ as a module over itself. If $R$ is $S_2,$ then $R$ is  unmixed. Furthermore, when $(R,\m,K)$ is local and catenary (e.g. when it is excellent), the $S_2$ condition also implies that $R$ is equidimensional. If $R$ is excellent and $S_2$, then its $\m$-adic completion $\widehat{R}$ is also $S_2$, and if $I$ is a canonical ideal of $R$, then $I \widehat R = \widehat I$ is a canonical ideal of $\widehat R$.




\subsection{Methods in positive characteristic}
We recall some of the definitions of singularities for rings of positive characteristic. We refer the interested reader to \cite{CraigBookTC,KarenSchoolVanishing,SurveyTestIdeals,SurveyP-1maps}  for surveys and a book on these topics.\\ 

Let $(R,\m,K)$ be a Noetherian local ring of characteristic $p>0,$ and let $F^e:R \to R$ be the $e$-th iteration of the Frobenius endomorphism on $R$, where $e$ is a positive integer. Let $M$ be an $R$-module. By $F_*^e(M)$ we denote $M$ viewed as a module over $R$ via the action of $F^e$. Specifically, for any $F_*^e(m_1),F_*^e(m_2) \in F_*^eM$ and for any $r \in R$ we have
\[
F_*^e(m_1) + F_*^e(m_2) = F_*^e(m_1+m_2) \ \ \ \mbox{ and } \ \ \ r \cdot F_*^e(m_1) = F_*^e(r^{p^e} m_1).
\]
If $e=1,$ we omit $e$ in the notation.
When $R$ is reduced, the endomorphism $F^e$ can be identified with the inclusion of $R$ into $R^{1/p^e}$, the ring of its $p^{e}$-th roots.

\begin{definition}
 $R$ is called {\it $F$-finite} if $F_*(R)$ is a finitely generated $R$-module.  
\end{definition}

A local ring $(R,\m,K)$ is $F$-finite if and only if  it is excellent and $[K:K^p] < \infty$  \cite{F-finExc}.

\begin{definition} 
 $R$ is  {\it $F$-pure} if  $F \otimes 1_M : R \otimes_R M \to R \otimes_R M$ is injective for all $R$-modules $M$. 
$R$ is  {\it $F$-split} if  the map $R  \to F_* R$ splits. 
\end{definition}

\begin{remark}\label{FpureFsplit}
If $R$ is an $F$-pure ring, $F$ itself is injective and $R$ must be a reduced ring. We have that $R$ is $F$-split if and only if $R$ is a direct summand of $F_* R$. If $R$ is an $F$-finite ring, $R$ is $F$-pure if and only $R$ is $F$-split \cite[Lemma $5.1$]{HoRo}.
As a consequence, if $(R,\m,K)$ is $F$-finite, we have that $R$ is $F$-pure if and only if $\widehat{R}$ is $F$-pure.
If $R$ is $F$-finite, we use the word $F$-pure to refer to both.
\end{remark}
\begin{definition}[\cite{BB-CartierMod}]
We say that an additive map $\phi:M\to M$ is {\it $p^{-e}$-linear} if  $\phi(r^{p^e}v)=r\phi(v)$ for every $r\in R, v\in M.$ 
\end{definition}

There is a bijective correspondence between $p^{-1}$-linear maps on $M$ and  $R$-module homomorphisms $F_*M \to M$.

\begin{definition}
We define the ring $R\{F\}$ as 
$\frac{R\langle F\rangle}{R(r^pF-Fr| r\in R)}$, the non-commutative $R$-algebra generated by $F$ with relations $r^p\cdot F=F\cdot r$, for $r \in R$.
\end{definition}
\begin{definition}
We say that an $R$-module  $M$ has a {\it Frobenius action},  if there is an additive map $F:M \to M$ such that $F(ru) = r^pF(u)$ for $u \in M$ and $r \in R$. 
\end{definition}

There is a natural equivalence between $R\{F\}$-modules and $R$-modules with a Frobenius action. In addition, every Frobenius action of $M$ corresponds to an $R$-module homomorphism $M\to F_*M$. If  $R$ is complete and $F$-finite, then
 $(F_* M)^{\vee} \cong F_*( M^\vee)$ \cite[Lemma $5.1$]{BB-CartierMod}. Then, there is an induced map
\[
F_*(M^\vee)\cong (F_* M)^{\vee}\to (M)^{\vee},
\]
which gives a correspondence between Frobenius actions on $M$ and  $p^{-1}$-linear maps on $\ck{M}$.
In this case, we have that the Frobenius map $F: R\to R$ induces a Frobenius action on $H^d_\m(R).$ Suppose $R$ has a canonical module $\omega_R.$ Let $\Phi:\omega_R\to \omega_R$ be the $p^{-1}$-linear map corresponding to the Matlis dual of $F:H^d_\m(R)\to H^d_\m(R)$.
We will refer to $\Phi$ as the {\it trace map} of $\omega_R$.

\begin{definition} \label{DefFRational}
A local ring $(R,\m,K)$ of dimension $d$ is called {\it $F$-rational} if it is Cohen-Macaulay and $H^d_\m(R)$ is simple as a $R\{F\}$-module. 
\end{definition}

We point out that this is not the original definition introduced by Hochster and Huneke \cite{HoHu1}, which is in terms of tight closure: $R$ is $F$-rational if the ideals generated by parameters are tightly closed.  However, both definitions are equivalent due to Smith \cite[Theorem $2.6$]{KarenFrational}. $F$-rational local rings have nice singularties; for example, they are normal domains.

\begin{definition}[\cite{HoHu1}]\label{DefFfinStrFreg} 
An $F$-finite ring $R$ is  {\it strongly $F$-regular} if for all nonzero elements $c \in R$, the $R$-linear homomorphism $\varphi:R \to F_*^e(R)$ defined by $\varphi(1) =  F_*^e(c)$ splits for $e \gg 0$.
\end{definition}

\begin{theorem}[{\cite[Theorem 3.1 c)]{HoHuStrong}}]\label{HoHu89}
Every $F$-finite regular ring is strongly $F$-regular. 
\end{theorem}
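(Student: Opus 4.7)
The plan is to reduce strong $F$-regularity for an $F$-finite regular local ring to a simple application of Kunz's theorem together with Krull's intersection theorem, exploiting the fact that freeness of $F^e_*R$ makes splittings detected at the level of residue fields.

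First I would recall Kunz's theorem: for an $F$-finite local Noetherian ring $(R,\m,K)$ of characteristic $p>0$, $R$ is regular if and only if $F^e_*R$ is a free $R$-module (equivalently, $F^e \colon R \to R$ is flat) for some, equivalently every, $e \geqslant 1$. Thus under the hypotheses, $F^e_*R \cong R^{n_e}$ as $R$-modules for every $e$.

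Next, fix $c \in R$ nonzero and consider the $R$-linear map $\varphi_e \colon R \to F^e_*R$ with $\varphi_e(1) = F^e_*(c)$. Since $F^e_*R$ is a finitely generated free $R$-module over a local ring, a surjection $F^e_*R \to R$ sending $F^e_*(c)$ to $1$ exists if and only if $F^e_*(c)$ is part of a minimal generating set of $F^e_*R$, and this in turn is equivalent to the condition
\[
F^e_*(c) \notin \m \cdot F^e_*R.
\]
Here I would use the identity $\m \cdot F^e_*R = F^e_*(\m^{[p^e]})$, which follows directly from the definition of the Frobenius-twisted module structure: $r \cdot F^e_*(x) = F^e_*(r^{p^e}x)$. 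Hence the splitting of $\varphi_e$ is equivalent to $c \notin \m^{[p^e]}$.

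Finally, since $\m^{[p^e]} \subseteq \m^{p^e}$ and $R$ is a Noetherian local ring, Krull's intersection theorem yields
\[
\bigcap_{e \geqslant 1} \m^{[p^e]} \;\subseteq\; \bigcap_{e \geqslant 1} \m^{p^e} \;=\; 0,
\]
so for every nonzero $c$ there exists $e_0$ with $c \notin \m^{[p^{e}]}$ for all $e \geqslant e_0$. Therefore $\varphi_e$ splits for $e \gg 0$, verifying Definition \ref{DefFfinStrFreg}. There is no substantial obstacle here; the only care point is the elementary but crucial identification $\m \cdot F^e_*R = F^e_*(\m^{[p^e]})$ and the invocation of Kunz's theorem to guarantee that $F^e_*R$ is free (so that nonmembership in $\m F^e_*R$ is equivalent to being extendable to a basis).
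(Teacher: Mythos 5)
Your argument is correct and is, in essence, the standard proof lying behind the Hochster--Huneke citation: Kunz's theorem gives freeness of $F^e_*R$, the identification $\m\cdot F^e_*R = F^e_*\bigl(\m^{[p^e]}\bigr)$ together with the fact that an element of a finite free module over a local ring generates a free direct summand precisely when it lies outside $\m\cdot F^e_*R$ reduces splitting of $\varphi_e$ to the condition $c\notin\m^{[p^e]}$, and $\m^{[p^e]}\subseteq\m^{p^e}$ plus Krull's intersection theorem supply $\bigcap_e \m^{[p^e]}=0$, with the chain $\m^{[p^{e+1}]}\subseteq\m^{[p^e]}$ guaranteeing that the splitting persists for all large $e$. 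The paper itself offers no proof of this statement, only the reference, so there is nothing in the text to compare against; your write-up is a valid self-contained argument. One small point to tidy up: you work throughout with a local ring $(R,\m,K)$, whereas Definition \ref{DefFfinStrFreg} and the theorem as stated concern an arbitrary $F$-finite ring. Either add the standard reduction to the local case (strong $F$-regularity is detected at maximal ideals, and regularity and $F$-finiteness localize), or observe that in this paper the result is only ever applied to the local ring $R_I$ in the proof of Theorem \ref{MainTech}, so the local version is all that is needed. With that remark, the argument is complete.
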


It is a well-known fact that strong $F$-regularity implies $F$-rationality. This could be seen from the relation that these notions have with tight closure (see \cite{HoHu1}).

\begin{definition}[\cite{KarlCentersFpurity}]
Suppose that $R$ is an integral domain. The \emph{test ideal} $\tau(R) \subseteq R$ is defined as the smallest non-zero compatible ideal of $R$. 
\end{definition}

$\tau(R)$ is the big test ideal originally defined by Hochster and Huneke \cite{HoHu1} in terms of tight closure. Schwede \cite[Theorem $6.3$]{KarlCentersFpurity} proved that the definition above is equivalent. We have that $R$ is strongly $F$-regular if and only if $\tau(R) = R$. Furthermore, $\tau(R) = \ann_{E_R(K)} 0^*_{E_R(K)}$, where $0^*_{E_R(K)}$ denotes the tight closure of $0$ in the injective hull $E_R(K)$ of $K$. We have that $R$ is strongly $F$-regular if and only if $0^*_{E_R(K)}= 0$.

\begin{remark} 
One can define strongly $F$-regular rings for rings that are not $F$-finite by requiring that for all nonzero elements $c \in R$, the $R$-linear homomorphism $\varphi:R \to F_*^e(R)$ defined by $\varphi(1) =  F_*^e(c)$ is pure for $e \gg 0$. If $R$ is $F$-finite, this definition is equivalent to Definition \ref{DefFfinStrFreg}. Furthermore, $R$ is strongly $F$-regular if and only if $\tau(R) = \ann_{E_R(K)} 0^*_{E_R(K)} = R$, as in the $F$-finite case \cite[Theorem $7.1.2$]{SmithThesis}.
Regular rings are strongly $F$-regular also for non $F$-finite rings. This can be proven using the Gamma construction \cite[Discussion 6.11 \& Lemma 6.13]{HoHu2} and Aberbach and Enescu's results on base change for test ideals \cite[Corollary 3.8]{AEBaseChange}.
\end{remark}

\begin{remark}\label{Rem Subset Cartier}
Let $\phi:R\to R$ be a $p^{-e}$-linear map. For every ideal $J\subseteq R$ we have $\phi(J^{[p^e]}) \subseteq J$. If $\phi$ is surjective then equality holds. Furthermore, if $\phi$ is surjective then $R$ is $F$-pure. In fact, there exists an element $r\in R$ such that $\phi(r)=1$, and then the $R$-linear homomorphism $\varphi:F^e_* R\to R$ defined by $\varphi(F_*^ex)=\phi(rx)$ gives the desired splitting.
\end{remark}

\begin{definition}[\cite{KarlCentersFpurity}]
Let $\phi:R \to R$ be a $p^{-e}$-linear map and let $J \subseteq R$ be an ideal. $J$ is  $\phi$-compatible if $\phi(J) \subseteq J$. An ideal $J$ is said to be compatible if it is $\phi$-compatible for all $p^{-e}$-linear maps $\phi:R\to R$ and all $e\in \NN$. 
\end{definition}

Compatible ideals in the definition above were used previously in different contexts \cite{MR85,KarenFrational,LyuKaren,HaraTakagi}. 
\begin{remark}\label{Principal}
If $R$ is a Gorenstein $F$-finite ring, we have that
$\Hom_R(F^e_*R,R)\cong F^e_* R$ as  $F^e_* R$-modules, and the isomorphism is given by precomposition with multiplication by elements in $F^e_* R.$  
Let $\Phi$ be the $p^{-1}$-map corresponding to a generator of  $\Hom_R(F_*R,R)\cong F^e_* R$ as a $F_* R$-module.
If an ideal $J$ is $\Phi$-compatible, then it is compatible
\cite[Theorem $3.7$]{SurveyTestIdeals}.
\end{remark}

\section{Canonical ideals and strong $F$-regularity}
We start by proving preparation lemmas that imply that under the hypotheses of Theorem \ref{MainThm}, $R$ is $F$-pure.
\begin{lemma}\label{Lemma F-inj}
Let $(R,\m,K)$ be a Noetherian local ring of dimension $d$. Suppose that $R$ has a canonical ideal $I \subsetneq R$ such that $H^{d-1}_\m(R/I)$ is a simple $R\{F\}$-module. Then the Frobenius map $F:H^{d-1}_\m(R/I)\to H^{d-1}_\m(R/I)$ is injective.
\end{lemma}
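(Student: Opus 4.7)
The plan is to use simplicity of $M := H^{d-1}_\m(R/I)$ as an $R\{F\}$-module to reduce to two cases, and to eliminate the degenerate case via local duality applied to $R/I$.

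First I would observe that $\ker F$ is an $R\{F\}$-submodule of $M$: it is an $R$-submodule, and is $F$-stable since any $\eta$ with $F(\eta) = 0$ also satisfies $F^2(\eta) = 0$. By simplicity, $\ker F \in \{0, M\}$, and the case $\ker F = 0$ is exactly injectivity. So suppose for contradiction that $F \equiv 0$ on $M$. Then every $R$-submodule of $M$ is automatically $F$-stable, so the $R\{F\}$- and $R$-submodule lattices of $M$ coincide; in particular $M$ is simple as an $R$-module, and since $R$ is local, $M \cong K$.

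Next I would reduce to the complete case: completion preserves local cohomology, the Frobenius action, and simplicity (the $R$-submodule lattice of an Artinian module coincides with its $\widehat R$-submodule lattice), and $I\widehat R$ remains a canonical ideal of $\widehat R$ since $\omega_R \otimes_R \widehat R \cong \omega_{\widehat R}$. Assume therefore that $R$ is complete, so $R/I$ admits a canonical module $\omega_{R/I}$. Since $I$ contains a nonzero divisor and $H^{d-1}_\m(R/I) \neq 0$, Grothendieck nonvanishing forces $\dim R/I = d-1$. Local duality over $R/I$ then yields
\[\omega_{R/I} \;\cong\; \Hom_{R/I}(M, E_{R/I}(K)) \;\cong\; \Hom_{R/I}(K, E_{R/I}(K)) \;\cong\; K,\]
using Matlis self-duality of the residue field.

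To conclude, note that $R/I$ is equidimensional and unmixed (from the structural properties of canonical ideals recalled in Section 2), so $\omega_{R/I}$ has full support on $\Spec R/I$ and $\dim \omega_{R/I} = \dim R/I$. But $\omega_{R/I} \cong K$ has dimension zero, forcing $\dim R/I = 0$ and hence $d = 1$. Then $R/I$ is Artinian with $\omega_{R/I} = E_{R/I}(K) \cong K$, which in turn forces $R/I \cong K$ and $I = \m$. Now $H^0_\m(R/I) = R/I \cong K$, and the Frobenius action on it is the standard $a \mapsto a^p$ of the field $K$, which is injective---contradicting $F \equiv 0$. The main obstacle is this last step: once $F \equiv 0$ forces $M \cong K$, one must reverse-engineer the structure of $R/I$ through Matlis and local duality to reduce to the one-dimensional case, at which point the Frobenius of a field trivially provides the contradiction.
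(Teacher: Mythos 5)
Your proposal is correct and follows the same two-step skeleton as the paper: first, $\ker F$ is an $R\{F\}$-submodule of $M := H^{d-1}_\m(R/I)$, so by simplicity it is $0$ or $M$; second, if $F\equiv 0$ then every $R$-submodule is $R\{F\}$-stable, so $M$ is simple over $R$, i.e. $M\cong K$, and this degenerate case must be ruled out. You diverge only in how the degenerate case is eliminated. The paper is terse: it simply notes that with $\dim(R/I)=d-1$ the isomorphism $H^{d-1}_\m(R/I)\cong K$ is ``only possible if $d=1$'' --- implicitly appealing to the fact that the top local cohomology of a positive-dimensional local ring is never finitely generated --- and then observes that in the $d=1$ case $H^0_\m(R/I)=R/I\cong K$ with injective Frobenius. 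You instead pass to the completion, apply local duality to identify $\omega_{R/I}\cong M^\vee\cong K$, and use $\dim\omega_{R/I}=\dim R/I$ to force $d=1$, reaching the same contradiction. Your route is logically complete and makes the paper's implicit step explicit, at the cost of invoking Matlis/local duality and the existence of $\omega_{R/I}$ (hence the completion step). Two small points: the inequality $\dim R/I\geq d-1$ comes from Grothendieck \emph{vanishing} ($H^i_\m=0$ for $i>\dim$), not nonvanishing; and once you know $\dim R/I=0$, the identification $R/I=H^0_\m(R/I)=M\cong K$ is immediate, so the detour through $\omega_{R/I}=E_{R/I}(K)$ at the very end is unnecessary. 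Also note that both your argument and the paper's tacitly use that $I$ contains a nonzero divisor (to get $\dim R/I\le d-1$); this follows from the hypotheses in force where the lemma is actually applied ($R$ equidimensional and unmixed), but is not literally stated in the lemma itself --- a shared, minor imprecision rather than a flaw in your proof.
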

\begin{proof} 
Since the Frobenius action on $\Ker(F) \subseteq H^{d-1}_\m(R/I)$ is trivial, we have that $\Ker(F)$ is an $R\{F\}$-submodule. Then either $\Ker(F)=0$ or $\Ker(F)=H^{d-1}_\m(R/I)$, because $H^{d-1}_\m(R/I)$ is simple.  If $\Ker F=H^{d-1}_\m(R/I)$, then every $R$-submodule of $\Ker F$ is an $R\{F\}$-module.  Since $H^{d-1}_\m(R/I)$ is a simple $R\{F\}$-module, $\Ker F=H^{d-1}_\m(R/I)$ must be a simple $R$-module, that is $H^{d-1}_\m(R/I)\cong R/\m = k$. Since $\dim(R/I)=d-1,$ this is only possible if $d=1$ and $R/I$ is zero-dimensional. However, if $\dim(R/I)=0$, we have $H^{d-1}_\m(R/I)=R/I= R/\m$ and $\Ker(F)=0$ in this case.
\end{proof}

\begin{remark}\label{Rem SES}
Suppose that $ H^{d-1}_\m(R/I)$ is a simple $R\{F\}$-module. From the short exact sequence $0\to I\to R\to R/I\to 0,$ we obtain an exact sequence of $R\{F\}$-modules
\[
\xymatrixcolsep{5mm}
\xymatrixrowsep{2mm}
\xymatrix{
H^{d-1}_\m(R/I) \ar[r] & H^d_\m(I) \ar[r] & H^d_\m(R) \ar[r] & 0.
}
\]
The map $H^{d}_\m(I)\to H^{d}_\m(R)$ is not injective by \cite[Lemma $3.3$]{MaFpurity}, and its kernel is a non-zero $R\{F\}$-submodule of $H^d_\m(I)$. Since $H^{d-1}_\m(R/I)$ is a simple $R\{F\}$-module, the first map in the sequence above must be injective. Hence, when $H^{d-1}_\m(R/I)$ is a simple $R\{F\}$-module, we have a short exact sequence of $R\{F\}$-modules
\[
\xymatrixcolsep{5mm}
\xymatrixrowsep{2mm}
\xymatrix{
0\ar[r] & H^{d-1}_\m(R/I) \ar[r] &  H^{d}_\m(I)\ar[r] &  H^{d}_\m(R)\ar[r] & 0.
}
\]
\end{remark}
\begin{remark} \label{completion} 
Note that $H^{d-1}_\m(R/I)$ and $H^d_\m(R)$ are simple $R\{F\}$-modules if and only if $H^{d-1}_{\widehat{\m}}(\widehat{R}/\widehat{I})$ and $H^d_{\widehat{\m}}(\widehat{R})$ are simple $\widehat{R}\{F\}$-modules.
\end{remark}
\begin{lemma} \label{dual} 
Let $(R,\m,K)$ be an excellent local ring of dimension $d$. Suppose that $R$ is equidimensional and unmixed, and it has a canonical ideal $I\subsetneq R$. If $H^{d-1}_\m(R/I)$ is a simple $R\{F\}$-module, then $H^{d-1}_\m(R/I) \cong \ck{(R/I)}$.
\end{lemma}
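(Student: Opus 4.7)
The plan is to build a commutative diagram linking the short exact sequence of Remark~\ref{Rem SES} with the Matlis dual of $0\to I\to R\to R/I\to 0$, and then to use the simplicity hypothesis to force a certain kernel to vanish. First I would reduce to the complete case via Remark~\ref{completion}, writing $R=S/J$ for some complete Gorenstein local $(S,\n,K)$ of dimension $n$, so that $\omega_R\cong \Ext^{n-d}_S(R,S)$ and local duality identifies Matlis duals with appropriate $\Ext$ groups.

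Matlis dualizing the short exact sequence of Remark~\ref{Rem SES} yields
\[
0\to \omega_R\to H^d_\m(I)^\vee\to \omega_{R/I}\to 0.
\]
A grade-driven degeneration of the change-of-rings spectral sequence $\Ext^p_R(I,\Ext^q_S(R,S))\Rightarrow \Ext^{p+q}_S(I,S)$ identifies $H^d_\m(I)^\vee\cong \Hom_R(I,\omega_R)\cong \Hom_R(\omega_R,\omega_R)$ (using $I\cong \omega_R$), and a short calculation exploiting the commutativity of $R$ shows that the leftmost map coincides with the composition $\omega_R\cong I\hookrightarrow R\to \Hom_R(\omega_R,\omega_R)$, where the last arrow sends $r$ to multiplication by $r$. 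Because $R$ is equidimensional, unmixed, and generically Gorenstein (as $\omega_R$ embeds in $R$), one has $\Ann_R(\omega_R)=0$, so this natural map is injective; let $C$ denote its cokernel.

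Applying the snake lemma to
\[
\xymatrix{0\ar[r]&\omega_R\ar[r]\ar@{=}[d]&R\ar[r]\ar[d]&R/I\ar[r]\ar[d]&0\\ 0\ar[r]&\omega_R\ar[r]&\Hom_R(\omega_R,\omega_R)\ar[r]&\omega_{R/I}\ar[r]&0}
\]
produces a short exact sequence $0\to R/I\to \omega_{R/I}\to C\to 0$, and Matlis dualizing gives
\[
0\to \ck C\to H^{d-1}_\m(R/I)\to \ck{R/I}\to 0.
\]
The key claim is that $\ck C$ is an $R\{F\}$-submodule of $H^{d-1}_\m(R/I)$. Granting this, the simplicity of $H^{d-1}_\m(R/I)$ as an $R\{F\}$-module forces either $\ck C=0$ or $\ck C=H^{d-1}_\m(R/I)$; the latter would imply $\ck{R/I}=0$ and hence $R/I=0$, contradicting $I\subsetneq R$. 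Therefore $\ck C=0$, the induced map $H^{d-1}_\m(R/I)\to \ck{R/I}$ is the desired isomorphism, and as a byproduct $C=0$, i.e., $R$ is $S_2$.

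The main obstacle I foresee is verifying the $R\{F\}$-stability of $\ck C$. The delicate point is that $R$, $R/I$, and $\Hom_R(\omega_R,\omega_R)$ naturally carry ring Frobenius actions, while $\omega_R$ and $\omega_{R/I}$ naturally carry $p^{-1}$-linear trace maps, so one must trace these two compatibilities carefully through the snake-lemma construction and Matlis duality to conclude that the map $H^{d-1}_\m(R/I)\to \ck{R/I}$ is $R\{F\}$-equivariant.
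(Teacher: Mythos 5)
Your proposal departs from the paper's argument after the shared opening move (dualizing the short exact sequence of Remark~\ref{Rem SES}). The paper's route is considerably shorter and avoids both of the delicate points you raise: from the dualized sequence it writes $0 \to J \to R \to \ck{H^{d-1}_\m(R/I)} \to 0$ with $J \cong \omega_R$ a possibly different canonical ideal, so that $\omega_{R/I} := \ck{H^{d-1}_\m(R/I)} \cong R/J$ and $J \supseteq I$. It then shows $J = I$ by observing that the natural map $R/I \to \Hom_{R/I}(\omega_{R/I},\omega_{R/I}) \cong \Hom_{R/I}(R/J,R/J) \cong R/J$ is precisely the quotient map with kernel $J/I$, and that this map is injective because $R/I$ inherits equidimensionality and unmixedness from $R$. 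No snake lemma, no verification of diagram commutativity, and crucially no second appeal to the $R\{F\}$-module structure: the simplicity hypothesis is consumed entirely by Remark~\ref{Rem SES}, and the rest is pure Matlis/Aoyama duality.

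The obstacle you flag is genuine and, I believe, not fixable within your framework. To show $\ck C$ is $R\{F\}$-stable is, after dualizing, to show that the trace $p^{-1}$-linear map $\Phi$ on $\omega_{R/I}$ carries the $R$-linear image of $R/I \hookrightarrow \omega_{R/I}$ back into itself. But that embedding is only $R$-linear, $\ck{(R/I)}$ carries no canonical Frobenius action prior to the lemma being proved, and the statement $\Phi(R/I) \subseteq R/I$ is essentially equivalent to the identification $\omega_{R/I} \cong R/I$ you are trying to establish, so the argument is circular exactly where you worry it might be. A secondary issue is that the commutativity of your snake-lemma square is asserted but not checked; the middle vertical $R \to \Hom_R(\omega_R,\omega_R)$ comes from the functorial biduality map, while the bottom row comes from dualizing a local cohomology long exact sequence, and reconciling these requires a real argument. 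Finally, your claimed byproduct that $C=0$, hence $R$ is $S_2$, would be a nontrivial strengthening not needed by the paper, which never has to compare the dualized sequence with $0 \to I \to R \to R/I \to 0$ term by term; it only needs the abstract ideal $J$ and then compares $J$ with $I$ directly.
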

\begin{proof} By Remark \ref{completion} we can assume that $R$ is complete. By Remark \ref{Rem SES} we have a short exact sequence of $R\{F\}$-modules $0 \to H^{d-1}_\m(R/I) \to H^d_\m(I) \to H^d_\m(R) \to 0$. Taking the Matlis dual we get an exact sequence of $R$-modules:
\[
\xymatrixcolsep{5mm}
\xymatrixrowsep{2mm}
\xymatrix{
0\ar[r] & \ck{H^d_\m(R)} \cong J \ar[r] & \ck{H^d_\m(I)} \cong R \ar[r] & \ck{H^{d-1}_\m(R/I)} \ar[r] & 0,
}
\]
where $J \cong \omega_R$ is potentially another canonical ideal for $R$. We then get that $\ck{H^{d-1}_\m(R/I)} \cong R/J =:\omega_{R/I}$ is a canonical module for $R/I$, and we want to show that $J=I$. We have a homomorphism of $R/I$-modules:
\[
R/I \longrightarrow \Hom_{R/I}(\omega_{R/I},\omega_{R/I}) \cong \Hom_{R/I}(R/J,R/J) \cong R/J,
\]
which is just the map induced by the inclusion $I\subseteq J$. Since $R$ is equidimensional and unmixed, so is $R/I$, and the kernel of the above map is trivial \cite{HoHuOmega}. In addition, this kernel is $J/I$.  Therefore, $J=I$ and $H^{d-1}_\m(R/I) \cong \ck{(R/I)}$.
\end{proof}

\begin{proposition}\label{PropFpure}
Let $(R,\m,K)$ be an $F$-finite Noetherian local ring of dimension $d$. Suppose that $R$ is $S_2$  and it has a canonical ideal $I\subsetneq R$ such that $H^{d-1}_\m(R/I)$ is a simple $R\{F\}$-module. Then, $R/I$ is an $F$-pure ring. As a consequence, $R$ is an $F$-pure ring.
\end{proposition}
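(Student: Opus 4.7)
The plan is to produce a surjective $p^{-1}$-linear map on $R/I$ by Matlis-dualizing the Frobenius action on $H^{d-1}_\m(R/I)$, and then invoke Ma's deformation theorem to lift $F$-purity from $R/I$ back to $R$. Since $R$ is $F$-finite, Remark \ref{FpureFsplit} lets me check $F$-purity after $\m$-adic completion, and Remark \ref{completion}, together with the fact recorded at the end of Section 2.1 that $\widehat{R}$ is still $S_2$ with canonical ideal $\widehat{I}$, shows that all the hypotheses are preserved by $(-)\otimes_R\widehat{R}$. So my first step is to reduce to the case in which $R$ is complete.

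With $R$ complete and $S_2$, hence equidimensional and unmixed, the full strength of Lemma \ref{dual} is available and yields an isomorphism of $R\{F\}$-modules $H^{d-1}_\m(R/I)\cong \ck{(R/I)}$; moreover the proof of that lemma identifies the canonical module $\omega_{R/I}$ with $R/I$ itself. As recalled in the preliminaries, Matlis duality in the complete $F$-finite setting exchanges Frobenius actions on Artinian modules with $p^{-1}$-linear maps on their (finitely generated) Matlis duals. Applying this to $H^{d-1}_\m(R/I)\cong \ck{(R/I)}$, the Frobenius $F$ on $H^{d-1}_\m(R/I)$ dualizes to a $p^{-1}$-linear map $\Phi:R/I\to R/I$, and injectivity of $F$ is equivalent to surjectivity of $\Phi$.

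Lemma \ref{Lemma F-inj} supplies precisely the injectivity of $F$, so $\Phi:R/I\to R/I$ is a surjective $p^{-1}$-linear map. Remark \ref{Rem Subset Cartier}, applied to the ring $R/I$, then immediately gives that $R/I$ is $F$-pure. For the final clause, I would cite \cite[Theorem 3.4]{MaFpurity}, which under exactly the $S_2$ plus canonical-ideal assumptions we have in place upgrades $F$-purity of $R/I$ to $F$-purity of $R$.

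The main delicate point I anticipate is the second step: keeping careful track of the identifications so that the Matlis dual of Frobenius really produces a $p^{-1}$-linear map on $R/I$ as an $R/I$-module (and not merely a map of $R$-modules out of $\omega_{R/I}$). Both the identification $\omega_{R/I}\cong R/I$ and the compatibility of the $p^{-1}$-structure with the $R/I$-action are ultimately consequences of the isomorphisms in Lemma \ref{dual}, but the bookkeeping must be done carefully to legitimately apply Remark \ref{Rem Subset Cartier} over the quotient ring $R/I$.
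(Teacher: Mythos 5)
Your proposal is correct and follows essentially the same route as the paper's own proof: reduce to the complete case via Remarks \ref{FpureFsplit} and \ref{completion}, identify $H^{d-1}_\m(R/I)$ with $\ck{(R/I)}$ (Lemma \ref{dual}), dualize the injective Frobenius action (Lemma \ref{Lemma F-inj}) to a surjective $p^{-1}$-linear map on $R/I$, apply Remark \ref{Rem Subset Cartier}, and finish with Ma's deformation theorem. The careful bookkeeping you flag about descent to an $R/I$-semilinear map is a legitimate point, but it resolves exactly as you expect since $I^{[p]}\subseteq I$ annihilates $H^{d-1}_\m(R/I)$.
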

\begin{proof}
By Remarks \ref{completion} and \ref{FpureFsplit}, we may assume that $R$ is a complete ring.
We have that the Frobenius action on $H^{d-1}_\m(R/I)$ is injective by Lemma \ref{Lemma F-inj}. This induces a surjective $p^{-1}$-linear map on $R/I=\left( H^{d-1}_\m (R/I)\right)^\vee.$ Then, $R/I$ is $F$-pure by Remark \ref{Rem Subset Cartier}.
Therefore, $R$ is also $F$-pure \cite[Theorem $3.4$]{MaFpurity}.
\end{proof}

The simplicity of $H^{d-1}_\m(R/I)$ forces $R/I$ to have mild singularities, as we show in the following result. This result will be needed in the proof of Theorem \ref{MainTech}.

\begin{theorem} \label{canonicalFreg} Let $(R,\m,K)$ be a Noetherian local $F$-finite ring of dimension $d$. Suppose that $R$ is equidimensional and unmixed, and that it has a canonical ideal $I\subsetneq R$ such that $H^{d-1}_\m(R/I)$ is a simple $R\{F\}$-module. Then $R/I$ is a  strongly $F$-regular Gorenstein ring.
\end{theorem}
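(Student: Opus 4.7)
The plan is to exploit the quasi-Gorenstein structure of $R/I$ that is forced by the simplicity hypothesis, and then reduce strong $F$-regularity of $R/I$ to a calculation of its compatible ideals.

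First, I would use Lemma \ref{dual}, which applies since $R$ is equidimensional and unmixed, to obtain $H^{d-1}_\m(R/I) \cong \ck{(R/I)}$. By general properties of canonical modules this forces $\omega_{R/I} \cong R/I$, so $R/I$ is quasi-Gorenstein. Matlis-dualizing the $R\{F\}$-module $H^{d-1}_\m(R/I)$ turns its Frobenius action into a $p^{-1}$-linear map $\Phi: F_*(R/I) \to R/I$, and Lemma \ref{Lemma F-inj} tells us that $F$ is injective on $H^{d-1}_\m(R/I)$, so $\Phi$ is surjective. By Remark \ref{Rem Subset Cartier}, $R/I$ is $F$-pure.

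Next, I would set up the Matlis-dual correspondence between $R\{F\}$-submodules of $H^{d-1}_\m(R/I)$ and $\Phi$-compatible ideals of $R/I$, so that the simplicity of $H^{d-1}_\m(R/I)$ as an $R\{F\}$-module translates into: the only $\Phi$-compatible ideals of $R/I$ are $0$ and $R/I$. Now, since $R/I$ is $F$-finite and quasi-Gorenstein, Grothendieck duality for Frobenius gives $\Hom_{R/I}(F_*^e(R/I), R/I) \cong F_*^e\omega_{R/I} \cong F_*^e(R/I)$ as $F_*^e(R/I)$-modules, with $\Phi^e$ a generator. Running the argument of Remark \ref{Principal} in this setting, every $p^{-e}$-linear map has the form $v \mapsto \Phi^e(gv)$ for some $g \in R/I$; hence if an ideal $J$ satisfies $\Phi(J) \subseteq J$, iterating gives $\Phi^e(J)\subseteq J$, and $\Phi^e(gJ) \subseteq \Phi^e(J) \subseteq J$ since $gJ \subseteq J$. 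Thus the only compatible ideals of $R/I$ are $0$ and $R/I$, and the test ideal $\tau(R/I)$ must equal $R/I$, making $R/I$ strongly $F$-regular.

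Finally, strong $F$-regularity implies Cohen-Macaulay, and Cohen-Macaulay together with quasi-Gorenstein gives Gorenstein, completing the statement.

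The main obstacle I expect is the structural step verifying that Grothendieck duality for Frobenius yields $\Hom_{R/I}(F_*^e(R/I), R/I) \cong F_*^e(R/I)$ in the merely quasi-Gorenstein (rather than Gorenstein) setting, and that the specific trace $\Phi$ obtained by Matlis-dualizing Frobenius on $H^{d-1}_\m(R/I)$ is indeed a generator—once this identification is made, the transfer of compatibility from $\Phi$ to all $p^{-e}$-linear maps and the conclusion of strong $F$-regularity are essentially formal.
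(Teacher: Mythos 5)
Your proposal is correct in spirit, but it takes a genuinely different route from the paper and leans on heavier machinery that the paper deliberately avoids. You set up the Matlis-dual dictionary, deduce from simplicity that the only $\Phi$-compatible ideals are $0$ and $R/I$, and then invoke Grothendieck duality for Frobenius in the quasi-Gorenstein setting to upgrade ``$\Phi$-compatible'' to ``compatible with all $p^{-e}$-linear maps,'' concluding $\tau(R/I) = R/I$. The facts you need do hold: $\Hom_{R/I}(F_*^e(R/I), \omega_{R/I}) \cong F_*^e\omega_{R/I}$ is valid for any $F$-finite ring admitting a dualizing complex (a hypercohomology spectral sequence collapses in the bottom degree, so no Cohen--Macaulay hypothesis is needed), and the Matlis dual of the natural Frobenius on top local cohomology is indeed a generator. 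But proving these carefully is real work, and you rightly identify it as the main obstacle. There is also a secondary gap you don't flag: to invoke the test-ideal criterion as the paper states it, $R/I$ must be a domain, which is not assumed; one can extract it (minimal primes of a reduced $F$-finite ring are uniformly compatible, so ``only $0$ and $R/I$ are compatible'' forces a unique minimal prime, which must be $0$), but this needs to be said.

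The paper's proof sidesteps all of this with a more elementary, self-contained argument: for an arbitrary nonzero $c \in R/I$, form the ideal $J = \bigcup_e \Phi^e(c(R/I))$ (an increasing union since $\Phi$ is surjective), observe $\Phi(J) = J$ so that $J$ is $\Phi$-compatible, dualize and use simplicity to get $J = R/I$, extract $N$ and $r$ with $\Phi^N(rc) = 1$, and directly exhibit the splitting $\varphi(F^N_* x) = \Phi^N(rx)$ verifying Definition \ref{DefFfinStrFreg}. This only uses $\Phi$ itself, Lemma \ref{Lemma F-inj}, and Matlis duality; it never needs the identification of all $p^{-e}$-linear maps with twists of $\Phi^e$, never needs to know $\Phi$ is a generator, and never needs $R/I$ to be a domain in advance. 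Your route buys generality (it fits into the standard compatible-ideal/test-ideal framework), while the paper's route buys a short, bare-hands proof with no unstated duality input.
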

\begin{proof}
We note that $R$ is strongly $F$-regular and Gorenstein if and only if its completion is also strongly $F$-regular and Gorenstein. 
We can assume that $R$ is complete
by Remark \ref{completion}. 
By Lemma \ref{dual}, it follows that $\ck{H^{d-1}_\m(R/I)} = \omega_{R/I} \cong R/I$. To prove that $R/I$ is Gorenstein, it remains to show that it is Cohen-Macaulay. Then, it suffices to show that $R/I$ is strongly $F$-regular. Let $\Phi:R/I \to R/I$ be the $p^{-1}$-linear map which is dual to the Frobenius action on $H^{d-1}_\m(R/I)$. We note that $\Phi$ is surjective by Lemma \ref{Lemma F-inj}.  
Let $c\in R/I$ be a nonzero element and set $J:=\bigcup_{e\in\NN}\Phi^e(c (R/I))$. We have  that $J$ is an ideal of $R/I$ that contains $c.$ In addition,  $\Phi(J)=J$ by Remark \ref{Rem Subset Cartier} and Lemma \ref{Lemma F-inj}. Since $c\neq 0$, $J$ is a nonzero ideal compatible with $\Phi$, and thus $\ck J$ corresponds to a nonzero $R\{F\}$-submodule of $ H^{d-1}_\m(R/I)$. But the latter is a simple $R\{F\}$-module, therefore $\ck J =H^{d-1}_\m(R/I)$, and hence $J=R/I$. In particular, there exists an element $r\in R/I$ and an integer $N$ such that $\Phi^N(rc)=1$. Let $\varphi:F^N_*(R/I)\to R/I$ be the $R/I$-linear map defined by 
$\varphi(F^N_* x)=\Phi^N(rx)$ for all  $x \in R/I.$
We have that $\varphi(F^e_* c)=1$. Since $0 \ne c \in R/I$ was chosen arbitrarily, we conclude that $R/I$ is strongly $F$-regular. 
\end{proof}

We recall a result of Goto, Hayasaka, and Iai \cite{Goto} that is needed to prove Theorem \ref{MainThm}.
\begin{proposition}[{\cite[Corollary $2.4$]{Goto}}] \label{Goto} Let $(S,\m,K)$ be a Cohen-Macaulay local ring which has a canonical ideal $I \subsetneq S$ such that $S/I$ is a regular local ring. Then $R$ is regular.
\end{proposition}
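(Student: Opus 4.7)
The plan is to prove the statement by induction on $d=\dim S$, reducing modulo a carefully chosen parameter in the inductive step and settling the one-dimensional case by a direct fractional-ideal argument.

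For the inductive step ($d\geq 2$), I would use prime avoidance (after passing to a faithfully flat extension with infinite residue field, if needed) to pick $x\in\m$ satisfying (i) $x$ is a non-zero divisor on $S$, (ii) $\bar x$ is a non-zero divisor on $S/I$, and (iii) $\bar x\notin(\m/I)^2$, so that $\bar x$ extends to a regular system of parameters of the regular ring $S/I$. From (i), $\bar S:=S/(x)$ is Cohen-Macaulay of dimension $d-1$. Condition (ii) gives $I:x=I$, hence $(x)\cap I=x\cdot(I:x)=xI$, so the image of $I$ in $\bar S$ equals $I/xI$. Combined with the standard identity $\omega_{\bar S}\cong\omega_S/x\omega_S\cong I/xI$ (valid because $\omega_S$ is maximal Cohen-Macaulay and $x$ is regular on it), this shows that the image of $I$ in $\bar S$ is a canonical ideal $\bar I$ of $\bar S$. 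Moreover $\bar S/\bar I=(S/I)/(\bar x)$ is regular of dimension $d-2$ by (iii). By the inductive hypothesis $\bar S$ is regular, and since $x$ is a non-zero divisor on the Cohen-Macaulay ring $S$ this implies $S$ itself is regular.

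For the base case $d=1$ we have $S/I=K$, so $I=\m$ and $\omega_S\cong\m$. The plan is to derive a contradiction from the assumption that $\m$ is not principal. Write $Q(S)$ for the total quotient ring. The isomorphism $\omega_S\cong\m$ together with the general identity $\End_S(\omega_S)=S$ gives $\End_S(\m)=S$, which in fractional-ideal language reads $(\m:_{Q(S)}\m)=S$. If $\m$ is not principal then $\m$ is not invertible, so $\m\cdot\m^{-1}$ is a proper ideal of $S$, hence contained in $\m$; this forces $\m^{-1}\subseteq(\m:_{Q(S)}\m)=S$, so $\Hom_S(\m,S)\cong\m^{-1}=S$. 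On the other hand, applying $\Hom_S(-,S)$ to $0\to\m\to S\to K\to 0$ and using $\Hom_S(K,S)=0$ (since $\Depth S=1$) together with $\Ext^1_S(S,S)=0$ yields
\[
0\to S\to\Hom_S(\m,S)\to\Ext^1_S(K,S)\to 0.
\]
Because $S$ is Cohen-Macaulay of dimension $1$, the Bass number $\dim_K\Ext^1_S(K,S)$ equals $\type(S)=\mu(\omega_S)\geq 1$, which forces $\Hom_S(\m,S)\supsetneq S$ and contradicts $\Hom_S(\m,S)=S$. Therefore $\m$ is principal and $S$ is a DVR.

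The main obstacle is the base case. One should resist the temptation to first try to prove that $S$ is Gorenstein using $\End_S(\omega_S)=S$ directly, as this is quickly circular; the pincer above is the clean route, squeezing $\m^{-1}$ between $S$ and $\End_S(\m)=S$ via non-invertibility, while the positive type of a one-dimensional Cohen-Macaulay local ring forces $\m^{-1}\supsetneq S$. The inductive step is then routine modulo the ideal-theoretic identity $(x)\cap I=xI$, which is the only computation there genuinely using that $\bar x$ is a non-zero divisor on $S/I$.
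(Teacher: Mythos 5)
The paper does not prove this Proposition: it is quoted from Goto, Hayasaka, and Iai \cite{Goto} (their Corollary~2.4), so there is no in-paper argument to compare against. (Also, the conclusion should read ``$S$ is regular''; the ``$R$'' is a typo.)

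Your proof is correct and self-contained. The inductive step is sound: after a flat local extension to an infinite residue field (which preserves and reflects all the hypotheses), prime avoidance supplies $x$ satisfying (i)--(iii); conditions (i) and (ii) give $(x)\cap I = x(I:x)=xI$, so the image of $I$ in $\bar S$ is $I/xI\cong\omega_S/x\omega_S\cong\omega_{\bar S}$ and is a proper canonical ideal, and (iii) makes $\bar S/\bar I$ regular; finally $\bar S$ regular with $x$ a nonzero divisor forces $S$ regular by the classical argument on embedding dimension. The base case is also fine: the pincer $\m^{-1}\subseteq\End_S(\m)=\End_S(\omega_S)=S$ (when $\m$ is not invertible) against $\Length_S\left(\Hom_S(\m,S)/S\right)=\type(S)\geq 1$ cleanly avoids the circularity you correctly flag. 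One remark worth recording: only your one-dimensional base case is actually used by the paper --- in the proof of Theorem~\ref{MainTech} the Proposition is applied with $S=R_I$, a one-dimensional Cohen--Macaulay local ring whose quotient by the canonical ideal is a field, and indeed the acknowledgments credit Linquan Ma with suggesting precisely that one-dimensional version. Your inductive step, while correct, proves more than the paper needs. If one wants to avoid Bass numbers in the base case, the same computation can be repackaged: positivity of $\type(S)$ produces $f\in\Hom_S(\m,S)\setminus S$, then $f(\m)\not\subseteq\m$ (else $f\in\End_S(\m)=S$), so $f\colon\m\to S$ is onto and hence an isomorphism between rank-one torsionfree modules, giving $\m$ principal directly; but this is only a cosmetic rewording of what you wrote.
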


Now, we are ready to prove our main theorem.

\begin{theorem}\label{MainTech}
Let $(R,\m,K)$ be a Noetherian local $F$-finite ring of dimension $d$ and characteristic $p>0$. Suppose that $R$  is $S_2$ and it has a canonical ideal $I\subsetneq R$ such that
$H^{d-1}_\m(R/I)$ is a simple $R\{F\}$-module. Then $R$ is a strongly $F$-regular ring.
\end{theorem}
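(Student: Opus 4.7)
The plan is to reduce to $R$ complete and then show that the test ideal $\tau(R)$ equals $R$. Two results already established in the paper provide the setup: by Proposition \ref{PropFpure}, $R$ is $F$-pure; by Theorem \ref{canonicalFreg}, $R/I$ is strongly $F$-regular and Gorenstein. In particular $R/I$ is a normal local ring, hence a domain, so $I$ is a prime ideal of height one. Matlis dualizing the short exact sequence of $R\{F\}$-modules in Remark \ref{Rem SES}, and invoking Lemma \ref{dual}, produces a $p^{-1}$-linear map $\Phi:R\to R$ with $\Phi(I)\subseteq I$ whose induced map $\overline{\Phi}:R/I\to R/I$ is precisely the trace map used in the proof of Theorem \ref{canonicalFreg}.

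I first set up a dichotomy. Since $R$ is $F$-finite and $F$-pure, the test ideal $\tau(R)$ is nonzero and compatible with every $p^{-e}$-linear map on $R$; in particular it is $\Phi$-compatible, so $(\tau(R)+I)/I$ is $\overline{\Phi}$-compatible in $R/I$. Via Matlis duality over $R/I$, this ideal of $R/I$ corresponds to an $R\{F\}$-submodule of $H^{d-1}_\m(R/I)$; since this module is simple by hypothesis, $(\tau(R)+I)/I$ must be either $0$ or $R/I$. Hence either $\tau(R)+I = R$, in which case $\tau(R)$ contains a unit since $I\subseteq\m$, giving $\tau(R) = R$ and we are done, or $\tau(R)\subseteq I$, the case I rule out below.

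Suppose $\tau(R)\subseteq I$ and localize at the prime ideal $I$. Since $R$ is $S_2$ and $\dim R_I = 1$, the ring $R_I$ is Cohen-Macaulay; localization of the canonical module gives $IR_I \cong \omega_{R_I}$; and because $R/I$ is a domain, $R_I/IR_I$ is the fraction field of $R/I$, in particular a regular ring. Proposition \ref{Goto} applied to $R_I$ then forces $R_I$ to be regular, so $R_I$ is strongly $F$-regular by Theorem \ref{HoHu89} and $\tau(R_I) = R_I$. As the test ideal commutes with localization in the $F$-finite setting, $\tau(R)R_I = \tau(R_I) = R_I$, contradicting $\tau(R)\subseteq I$.

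The main technical point I expect to have to argue carefully is the Matlis duality correspondence used in the dichotomy, namely that $\overline{\Phi}$-compatibility of an ideal of $R/I$ is genuinely equivalent to $F$-stability of its annihilator inside the simple $R\{F\}$-module $H^{d-1}_\m(R/I)$, so that the simplicity hypothesis bites. Once this correspondence is established, the localization at the height-one prime $I$ combined with Proposition \ref{Goto} delivers the contradiction cleanly, and strong $F$-regularity of $R$ follows.
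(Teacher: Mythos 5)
Your proof is correct and follows essentially the same strategy as the paper: reduce to the complete case, use Theorem \ref{canonicalFreg} to conclude that $I$ is a height-one prime, apply Proposition \ref{Goto} to make $R_I$ regular (hence strongly $F$-regular), and then combine compatibility of the test ideal with the simplicity of $H^{d-1}_\m(R/I)$ and the fact that $\tau(-)$ localizes to force $\tau(R)=R$. The only substantive variation is in the test-ideal step. You derive the dichotomy ``$\tau(R)+I=R$ or $\tau(R)\subseteq I$'' by observing that the ideal $(\tau(R)+I)/I$ of the Gorenstein ring $R/I$ is compatible with the trace map $\overline{\Phi}$, and invoking Matlis duality over $R/I$ together with simplicity to conclude that this ideal is $0$ or the whole ring. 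The paper instead works inside $E_R(K)\cong H^d_\m(I)$: it sets $N=\ann_{E_R(K)}\tau(R)$, observes $N$ is an $R\{F\}$-submodule, and uses a socle argument to see that $N$ must contain the simple $R\{F\}$-submodule $H^{d-1}_\m(R/I)$, whence $\tau(R)\subseteq I$. These are dual phrasings of the same mechanism; your version sidesteps the socle argument and the explicit proof-by-contradiction framing at that step, which is a small gain in cleanliness, but the ingredients are identical. Both versions quietly rely on $R$ being reduced so that $\tau(R)$ makes sense and is nonzero; you correctly flag Proposition \ref{PropFpure} for this, whereas the paper leaves it implicit.
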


\begin{proof} Under our assumptions on $R$, we have that $\tau(\widehat R) \cap R = \tau(R)$  \cite[Theorem $2.3$]{LyuKaren}
and $I\widehat{R}$ is a canonical ideal for $\widehat{R}.$ Thus, it suffices to prove our claim assuming that $R$ is complete by Remark \ref{completion}. By Theorem \ref{canonicalFreg}, $I$ is a height one prime ideal. Since $R$ is equidimensional, we have that $(\omega_R)_I$ is a canonical module for $R_I$, and in particular $IR_I$ is a canonical ideal for $R_I$. Since $R_I$ is a one-dimensional Cohen-Macaulay local ring and $R_I/IR_I$ is a field, we have that $R_I$ must be regular by Proposition \ref{Goto}.

We finish proving the theorem by means of contradiction. Assume that $R$ is not strongly $F$-regular.  Let $\tau(R)$ be the test ideal of $R$. We claim that $\tau(R) \subseteq I$. Let $N=\ann_{E_R(K)} \tau(R)$, which is a submodule of $E_R(K) \cong H_\m^d(I)$  compatible with every Frobenius action on $E_R(K)$ (see \cite{LyuKaren}). 
In particular, $N$ is an $R\{F\}$-submodule of $H_\m^d(I)$. As $H_\m^{d-1}(R/I)$ is a simple $R\{F\}$-submodule of $H_\m^d(I)$, it must be contained in $N$. In fact, they cannot be disjoint because they both contain the socle of $H_\m^d(I)$. Taking annihilators in $R$ and applying Matlis duality, we get
\[
\ds \tau(R) = \ann_R(N) \subseteq \ann_R(H^{d-1}_\m(R/I)) = I.
\]
Since the test ideal defines the non-strongly $F$-regular locus \cite[Theorem 7.1]{LyuKaren}, $R_I$ is not strongly $F$-regular. This is a contradiction because every regular ring is strongly $F$-regular by Theorem \ref{HoHu89}.
\end{proof}

We are now ready to prove Theorem \ref{MainThm}. We proceed by reducing to the $F$-finite case via gamma construction.
This method is well-know to the experts. We refer to  \cite[Dicussion 6.11 \& Lemma 6.13]{HoHu2}  for definitions and properties. 
\begin{proof}[Proof of Theorem \ref{MainThm}]  
Completion does not change the assumption that $R/I$ is $F$-rational by Remark \ref{completion}.
In addition, if $\widehat R$ is strongly $F$-regular, then so is $R$,
because $\tau(R) \widehat{R}=\tau(\widehat{R})$ \cite[Theorem $2.3$]{LyuKaren}. 
We now consider a $p$-base for $K^{1/p}$, $\Lambda$, and a cofinite set 
$\Gamma\subseteq \Lambda$.
Consider the faithfully flat extension $R \to \GG R$ given by the gamma construction. We have that $\GG R$ is a complete $F$-finite local ring with maximal ideal $\GG \m := \m \GG R$ and residue field $\GG K \cong K \otimes_R \GG R$. Since $R$ is complete, there exists a Gorenstein local ring $(S,\n,K)$, with $\dim(R) = \dim(S)$, such that $R$ is a homomorphic image of $S$. By functoriality of the Gamma construction, we have that $\GG S$ maps homomorphically onto $\GG R$. Furthermore, the map $S \to \GG S$ is local and flat, with $\GG K$ as closed fiber. Since $S$ is a Gorenstein ring,  so is $\GG S$. 
Because $I$ is a canonical module of $R$, we have that $I \cong \Hom_S(R,S)$. Since $S \to \GG S$ is faithfully flat, we have
\[
\ds I \otimes_R \GG R \cong \Hom_S(R,S) \otimes_S \GG S \cong \Hom_{\GG S}(R \otimes_S \GG S, \GG S) \cong \Hom_{\GG S}(\GG R,\GG S).
\]
Therefore, $I \otimes_R \GG R \cong \omega_{\GG R}$ is a canonical module for $\GG R$. In addition, $I\otimes_R \GG R \cong I\GG R$; therefore, $\GG I:= I \GG R \subseteq \GG R$ is a canonical ideal of $\GG R$. Now consider the flat local homomorphism $R/I \to \GG R/\GG I$ induced by the 
gamma construction above. We have that $R/I$ is excellent because it is complete. In addition, $R/I$ is a Cohen-Macaulay domain because it is $F$-rational. Furthermore, the closed fiber is $\GG K$, and $R/I$ is Gorenstein by Lemma \ref{dual}. 
We have that $\GG R/\GG I$ is $F$-rational because parameter ideals are tightly closed in $\GG R/ \GG I$ \cite[Proposition 3.2]{Aberbach01}.
Since $\GG R$ is $F$-finite,  we conclude that $\GG R$ is strongly $F$-regular by Theorem \ref{MainTech}. In this case, $\tau(\GG R) = \GG R$, so that $0^*_{E_{\GG R}(\GG K)} = 0$. Now, we apply \cite[Corollary 3.8]{AEBaseChange} to get that $\tau(R)\GG R = \tau(\GG R) = \GG R$. Therefore, $\tau(R) = R$, and so, $R$ is strongly $F$-regular. 
\end{proof}

As a corollary of this result, we weaken the Cohen-Macaulay assumption in Proposition \ref{Goto}  to $S_2$ for  excellent rings of positive characteristic.

\begin{corollary}\label{ExtGoto}
Let $(R,\m,K)$ be an  excellent local ring  of positive characteristic. Suppose that $R$ is $S_2$, and it has a canonical ideal $I\subsetneq R$ such that $R/I$ is regular. Then $R$ is regular.
\end{corollary}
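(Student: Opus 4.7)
The plan is to reduce the statement to Theorem \ref{MainThm} followed by Proposition \ref{Goto}. The overall structure is: use the regularity of $R/I$ to conclude that $R/I$ is $F$-rational, invoke Theorem \ref{MainThm} to deduce that $R$ is strongly $F$-regular (hence in particular Cohen-Macaulay), and then apply the Goto-Hayasaka-Iai result (Proposition \ref{Goto}) to conclude that $R$ itself is regular.

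More specifically, the first step is to verify that a regular local ring $R/I$ is $F$-rational in the sense of Definition \ref{DefFRational}. A regular local ring is Cohen-Macaulay, so one only needs that $H^{d-1}_\m(R/I)$ is a simple $R\{F\}$-module, where $d-1 = \dim(R/I)$. This is a standard fact: for a regular local ring $(A,\n,L)$ of dimension $n$, the top local cohomology $H^n_\n(A)$ is simple as an $A\{F\}$-module (for instance, by Lyubeznik's results, or directly because the Frobenius acts injectively on $H^n_\n(A)$ and any nonzero $F$-stable submodule must contain the socle and therefore generate the whole module under the $A\{F\}$-action). Hence $R/I$ is $F$-rational.

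Second, all the hypotheses of Theorem \ref{MainThm} are satisfied: $R$ is excellent and $S_2$ by assumption, and $R/I$ is $F$-rational by the previous paragraph. Theorem \ref{MainThm} therefore yields that $R$ is strongly $F$-regular; in particular, $R$ is a Cohen-Macaulay normal domain (this is recorded in the abstract and follows from the usual properties of strong $F$-regularity).

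Finally, since $R$ is now known to be Cohen-Macaulay with canonical ideal $I \subsetneq R$ and $R/I$ regular, Proposition \ref{Goto} applies directly and gives that $R$ is regular. The argument is essentially a one-line synthesis of the main theorem with \cite{Goto}, so there is no real obstacle; the only point worth being careful about is making sure that the $F$-rationality of $R/I$ (which is only defined for Cohen-Macaulay rings with simple top local cohomology) really does follow from regularity without any $F$-finiteness hypothesis, but this is standard.
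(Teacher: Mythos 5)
Your argument is correct and follows the same route as the paper's own proof: deduce that $R/I$ is $F$-rational from regularity, apply Theorem~\ref{MainThm} to get that $R$ is strongly $F$-regular and hence Cohen-Macaulay, and then invoke Proposition~\ref{Goto}. The extra detail you give on why a regular local ring is $F$-rational is fine but standard, and the paper simply cites this fact without elaboration.
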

\begin{proof} As $R/I$ is regular,  it is an $F$-rational ring. By Theorem \ref{MainThm} $R$ is strongly $F$-regular; therefore, $R$ is Cohen-Macaulay. We can now apply Proposition \ref{Goto} to get the desired result.
\end{proof}

Finally, we give an example which shows that the sufficient condition for strong $F$-regularity in Theorem \ref{MainTech} is not necessary.
That is, if $R$ is strongly $F$-regular, we cannot always find a canonical ideal $I \subseteq R$ such that $H^{d-1}_\m(R/I)$ is a simple $R\{F\}$-module, or equivalently such that $R/I$ is strongly $F$-regular. 
\begin{example} Let $K$ be a field with char$(K) \ne 3$, and consider the two dimensional domain
\[
R = K\ps{s^3,s^2t,st^2,t^3} \cong K\ps{x,y,z,w}/J,
\] 
where $J = (z^2-yw,yz-xw,y^2-xz)$. Then $R$ has type two, and it is a direct summand of $k\ps{s,t}$; therefore, it is strongly $F$-regular \cite[Theorem 3.1 (e)]{HoHuStrong}. Any canonical ideal of $R$ is two-generated, say $I = (f,g)$. Denote by $\m$ the maximal ideal of $R$. If $R/I$ was strongly $F$-regular, then it would be regular because it is a one-dimensional ring. This would be equivalent to $\dim_K(\m/(\m^2+I+J)) = 1$. Since $J \subseteq \m^2$, we have
\[
\ds \dim_K\left(\frac{\m}{\m^2+I+J}\right) = \dim_K \left(\frac{\m}{\m^2+(f,g)}\right) \gs 2.
\]
\end{example}

\begin{remark}\label{RefGeometry}  
If $R$ is an $F$-finite normal domain, one can use a more geometric argument to show that if $R/I$ is strongly F-regular, then so is $R$. This is done by choosing a canonical divisor on $\Spec(R)$ corresponding to the ideal $I\cong \omega_R$, and by using F-adjunction \cite{FAdj}.
\end{remark}

\section*{Acknowledgments} 
We would like to thank Linquan Ma for valuable comments which greatly improved the exposition of the paper and for suggesting to us the one-dimensional version of Proposition \ref{Goto}. 
We thank Karl Schwede for pointing out Remark \ref{RefGeometry}.
We also thank Craig Huneke for useful discussions. 
We thank the referee for helpful comments. In particular, for suggesting to drop the assumption of $F$-finite in Theorem \ref{MainThm}.
The first author was partially supported by NSF Grant DMS-$1259142.$
The second author thanks the National Council of Science and Technology of Mexico (CONACyT) for support through Grant \#207063.

\bibliographystyle{alpha}
\bibliography{References}
{\footnotesize

\noindent \small \textsc{Department of Mathematics, University of Virginia, Charlottesville, VA  22903} \\ \indent \emph{Email address}:  {\tt ad9fa@virginia.edu} 
}

\vspace{.25cm}

\noindent \small \textsc{Department of Mathematics, University of Virginia, Charlottesville, VA  22903} \\ \indent \emph{Email address}:  {\tt lcn8m@virginia.edu} 

\end{document}